\documentclass[12pt]{article}
\usepackage{a4wide}
\usepackage{inputenc}
\usepackage[a4paper, margin=2.3cm]{geometry}
\usepackage{amsmath,amssymb,amsthm}
\usepackage{color}
\usepackage{parskip}
\usepackage{hyperref}
\usepackage{parskip}
\usepackage{setspace}
%%%%%%%%%%%%%%%%%%%%%%%%%%%%%%%%%%%%%%%%%
\usepackage{graphicx}
\usepackage{cases}
 
%%%%%%%%%% Start TeXmacs macros

\newtheorem{theorem}{Theorem}[section]

\newtheorem{lemma}[theorem]{Lemma}
\newtheorem{proposition}[theorem]{Proposition}

\newtheorem*{definition*}{Definition}

\def\AA{\mathcal{A}}
\def \BB{\mathcal{B}} 
\def\EE{\mathcal{E}}
\def\FF{\mathcal{F}}

\begin{document}

\title{A point-plane incidence theorem in matrix rings} 

\author{ Nguyen Van The \thanks{University of Science, Vietnam National University - Hanoi, Email: nguyenvanthe@hus.edu.vn} \and Le Anh Vinh\thanks{Vietnam National University - Hanoi, Email: vinhla@vnu.edu.vn. Vietnam Institute of Educational Sciences. Email: vinhle@vnies.edu.vn}}
\date{}
\maketitle  
\begin{abstract}
In this paper, we study a point-hyper plane incidence theorem in matrix rings, which generalizes all previous works in literature of this direction. 
\end{abstract}

\section{Introduction}
Let $\mathbb{F}_q$ be a finite field of order $q$ where $q$ is an odd prime power. Let $M_n(\mathbb{F}_q)$ be the set of $n \times n$ matrices with entries in $\mathbb{F}_q$ and $GL_n(\mathbb{F}_q)$ be the set of invertible matrices in $M_n(\mathbb{F}_q)$. For $A_1,A_2,\dots,A_d, B \in M_n(\mathbb{F}_q),$ we define a hyper-plane in $M_n(\mathbb{F}_q)^{d+1} = M_n(\mathbb{F}_q) \times \dots \times M_n(\mathbb{F}_q)$ as the set of points $(X_1,\dots,X_d,Y) \in M_n(\mathbb{F}_q)^{d+1}$ satisfying  
\begin{equation}\label{eq.main}
    A_1X_1 + \dots A_dX_d + B = Y.
\end{equation} 
Note that, this definition is a formal definition, however, it is associated to an $d$-dimensional affine plane over the module $M_n(\mathbb{F}_q)^{d+1}$. When $d = 1$, we say the set of points satisfying \eqref{eq.main} is a line in $M_n(\mathbb{F}_q)^2$ formed by $A_1$ and $B$. In the first version of this note, we prove the following point-line incidence with $d=1$ and $n = 2$. 
\begin{theorem}
Let $\mathcal{P}$ be a set of points and $\mathcal{L}$ be a set of lines in $M_2(\mathbb{F}_q) \times M_2(\mathbb{F}_q)$, then we have
\[ I(\mathcal{P},\mathcal{L}) \leq \dfrac{|\mathcal{P}||\mathcal{L}|}{q^4} + \sqrt{2} q^{7/2}\sqrt{|\mathcal{P}||\mathcal{L}|}.\]
\end{theorem}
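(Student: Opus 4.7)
The plan is to apply the expander mixing lemma to the bipartite incidence graph between points and lines. I would take $G$ to be the bipartite graph with both vertex parts equal to $M_2(\mathbb{F}_q)^{2}$, where a point $(X, Y)$ is joined to a line $(A, B)$ precisely when $AX + B = Y$. This graph is biregular of degree $q^{4}$ on each side with $q^{8}$ vertices in each part, so the main term $|\mathcal{P}||\mathcal{L}|/q^{4}$ is exactly the expected incidence count at this density.

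The crux of the argument is to control the second largest singular value $\sigma_{2}$ of the biadjacency matrix $\mathbf{M}$. Since $G$ is invariant under additive translation of $M_2(\mathbb{F}_q)^{2}$, the Gram matrix $\mathbf{M}\mathbf{M}^{T}$ is a Cayley convolution operator on $\mathbb{F}_q^{8}\cong M_2(\mathbb{F}_q)^{2}$ with kernel
\[
  w(U, V) \;=\; |\{A \in M_2(\mathbb{F}_q) : AU = V\}|.
\]
Its eigenvalues are the Fourier coefficients $\hat{w}(S, T)$ taken with respect to the trace pairing $(U, V) \mapsto \chi(\operatorname{tr}(SU) + \operatorname{tr}(TV))$, where $\chi$ is a fixed nontrivial additive character of $\mathbb{F}_q$. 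Cyclicity of the trace together with orthogonality of characters reduce this to the clean counting identity
\[
  \hat{w}(S, T) \;=\; q^{4} \cdot |\{A \in M_2(\mathbb{F}_q) : TA = -S\}|,
\]
so the eigenvalue estimate amounts to counting preimages of the linear map $A \mapsto TA$ on the matrix ring.

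The main technical step, and where I expect the only real obstacle to lie, is the rank-one case for $T$. Writing such a $T$ as $T = uv^{T}$ with nonzero $u, v \in \mathbb{F}_q^{2}$, one computes $TA = u(A^{T} v)^{T}$, so the image of $A \mapsto TA$ is the two-dimensional subspace $\{u w^{T} : w \in \mathbb{F}_q^{2}\}$ consisting of the matrices whose column space lies in $\operatorname{col}(T)$; hence $|\{A : TA = -S\}|$ equals $q^{2}$ when $\operatorname{col}(S) \subseteq \operatorname{col}(T)$ and vanishes otherwise. Combined with the trivial cases ($T$ invertible yields $\hat{w} = q^{4}$, and $T = 0$ forces $S = 0$ with $\hat{w}(0, 0) = q^{8}$ as the principal eigenvalue), this will show that every nontrivial Fourier coefficient satisfies $|\hat{w}(S, T)| \le q^{6}$, so $\sigma_{2} \le q^{3}$, which sits comfortably below the $\sqrt{2}\, q^{7/2}$ demanded by the statement. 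I would then conclude by invoking the bipartite expander mixing lemma $|e(\mathcal{P}, \mathcal{L}) - d\,|\mathcal{P}||\mathcal{L}|/N| \le \sigma_{2}\sqrt{|\mathcal{P}||\mathcal{L}|}$ with $d = q^{4}$ and $N = q^{8}$; everything beyond the rank-one calculation is routine Fourier bookkeeping.
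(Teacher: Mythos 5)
Your proposal is correct, and it takes a genuinely different route from the paper. The paper proves the theorem as the $d=1$, $n=2$ case of its general result: it builds Vu's \emph{directed} sum-product graph on $M_2(\mathbb{F}_q)^2$ (an edge from $(A,E)$ to $(B,F)$ when $AB=E+F$), verifies normality so that the directed expander mixing lemma applies, and then bounds the second eigenvalue by expanding $A_GA_G^t$ into the identity, the all-ones matrix, and adjacency matrices of rank-stratified auxiliary graphs, each of which is bounded crudely by its degree; this yields $\lambda\le\sqrt{2}\,q^{7/2}$. You instead diagonalize the Gram matrix of the bipartite incidence graph exactly: common neighbourhoods depend only on differences, so the Gram matrix is a convolution operator on the additive group $M_2(\mathbb{F}_q)^2$, its eigenvalues are the Fourier coefficients $q^4\,|\{A: TA=-S\}|$, and your rank-one analysis correctly evaluates these as $q^6$ (when $\operatorname{col}(S)\subseteq\operatorname{col}(T)$), $q^4$ ($T$ invertible), or $0$ --- a computation confirmed by the trace identity $\sum_{S,T}\hat w(S,T)=q^{8}\cdot w(0,0)=q^{12}$. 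Hence $\sigma_2=q^{3}$, which is strictly sharper than the paper's $\sqrt{2}\,q^{7/2}$; the paper's loss of $q^{1/2}$ comes precisely from bounding the stratified pieces by their degrees rather than by their spectra orthogonal to the all-ones vector, a loss your exact diagonalization avoids, and you also sidestep the normality check entirely. Two small points to nail down in a write-up: fix consistently which Gram matrix you use (common lines through two points gives the kernel $|\{A: A(X-X')=Y-Y'\}|$, matching your $w$), and note that distinct pairs $(A,B)$ determine distinct lines, so $\mathcal{L}$ embeds faithfully as a vertex subset. The trade-off is that the paper's rank-stratification is the engine it reuses for the general $d$-term equation over $M_n(\mathbb{F}_q)$, though your Fourier computation would generalize there as well (and would likely improve those bounds too).
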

During revision process, Xie and Ge \cite{Xie} extended this results by considering the equation $AX + BY = C + D$ where $A, B, C, D, X, Y \in M_n(\mathbb{F}_q)$, which also generalizes the work of Mohammadi, Pham and Wang \cite{Moh} for $n = 2$. Some similar results could be found in \cite{Kara, The}, in which they consider the sum-product equation. 

In this note, we prove a generalization of all above mentioned results in \cite{Kara, Moh, The, Xie}. Particularly, we consider the equation \eqref{eq.main} for all $d \geq 1$ and $n \geq 2$. 

\begin{theorem}\label{theo.main}
For $d \geq 1, n \geq 2,$ let $\mathcal{A}_1,\dots,\mathcal{A}_d,\mathcal{B}_1,\dots,\mathcal{B}_d, \mathcal{E}, \mathcal{F} \subset M_n(\mathbb{F}_q)$ and $N$ be the number of solutions to the sum-product equation
\[ A_1B_1 + A_2B_2+\dots+A_dB_d = E + F, \quad A_i \in \mathcal{A}_i, B_i \in \mathcal{B}_i, E \in \mathcal{E}, F \in \mathcal{F}.\]
Then we have 
\[ \left| N - \dfrac{|\EE||\FF||\AA_1||\BB_1|\dots |\AA_n||\BB_n|}{q^{n^2}}\right| \ll q^{dn^2 - (d-1)n/2 - 1/2} \sqrt{|\EE||\FF|\prod_{i=1}^n|\AA_i||\BB_i|}.\]
\end{theorem}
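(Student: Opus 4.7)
The plan is to apply Fourier analysis on the additive group $(M_n(\mathbb{F}_q),+)\cong \mathbb{F}_q^{n^2}$, whose additive characters are parametrized by $T \in M_n(\mathbb{F}_q)$ through $\chi_T(X) := e_q(\operatorname{tr}(TX))$, with $e_q$ a fixed nontrivial additive character of $\mathbb{F}_q$. Expanding the indicator of $\sum_i A_iB_i - E - F = 0$ via the orthogonality $\mathbf{1}[Y = 0] = q^{-n^2}\sum_T\chi_T(Y)$ and isolating the $T = 0$ contribution produces the expected main term $M := |\mathcal{E}||\mathcal{F}|\prod_i|\mathcal{A}_i||\mathcal{B}_i|/q^{n^2}$, while the remainder factors cleanly as
\[
N - M \;=\; \frac{1}{q^{n^2}}\sum_{T\neq 0}\overline{\hat{\mathcal{E}}(T)}\,\overline{\hat{\mathcal{F}}(T)}\prod_{i=1}^d S_i(T),
\]
where $\hat{\mathcal{S}}(T) := \sum_{X\in\mathcal{S}}\chi_T(X)$ is the Fourier transform of the indicator of $\mathcal{S}$, and $S_i(T) := \sum_{A\in\mathcal{A}_i,B\in\mathcal{B}_i}\chi_T(AB)$ is the bilinear character sum associated to $(\mathcal{A}_i,\mathcal{B}_i)$.

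The crucial technical input is a pointwise bound on $S_i(T)$ for $T\neq 0$. Applying Cauchy--Schwarz in $A$ after extending the outer sum to all of $M_n(\mathbb{F}_q)$, expanding $|S_i(T)|^2$, and using the cyclic identity $\operatorname{tr}(TAC) = \operatorname{tr}(A\cdot CT)$ together with orthogonality $\sum_{A\in M_n(\mathbb{F}_q)}\chi_T(AC) = q^{n^2}\mathbf{1}[CT = 0]$, one obtains
\[
|S_i(T)|^2 \;\le\; |\mathcal{A}_i|\,q^{n^2}\,\#\{(B,B')\in\mathcal{B}_i^2:(B-B')T=0\}.
\]
For $T$ of rank $r \ge 1$, the left annihilator $\{C\in M_n(\mathbb{F}_q):CT=0\}$ is an $\mathbb{F}_q$-subspace of dimension $n(n-r)\le n(n-1)$, so a coset count gives $\#\{(B,B')\}\le|\mathcal{B}_i|\,q^{n(n-1)}$, yielding the uniform estimate $|S_i(T)|\le\sqrt{|\mathcal{A}_i||\mathcal{B}_i|}\,q^{n^2-n/2}$ valid for every $T\neq 0$.

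To assemble the final bound, I would apply this pointwise estimate to $d-1$ of the factors $S_i$ (say $i\geq 2$), leaving
\[
|N-M| \;\le\; \frac{q^{(d-1)(n^2-n/2)}}{q^{n^2}}\prod_{i\geq 2}\sqrt{|\mathcal{A}_i||\mathcal{B}_i|}\cdot\sum_{T\neq 0}|\hat{\mathcal{E}}(T)||\hat{\mathcal{F}}(T)||S_1(T)|,
\]
and bound the remaining triple sum by Cauchy--Schwarz combined with Parseval in the form $\bigl(\sum_T|\hat{\mathcal{E}}(T)|^2\bigr)^{1/2}\bigl(\sum_T|\hat{\mathcal{F}}(T)|^2|S_1(T)|^2\bigr)^{1/2}$. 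Parseval's identity evaluates the first factor as $q^{n^2/2}\sqrt{|\mathcal{E}|}$, and orthogonality over $T$ converts the second factor into $q^{n^2}$ times the representation count $\#\{(F,F',A,A',B,B')\in\mathcal{F}^2\times\mathcal{A}_1^2\times\mathcal{B}_1^2:F-F' = A'B' - AB\}$, which is at most $|\mathcal{F}||\mathcal{A}_1|^2|\mathcal{B}_1|^2$ upon noticing that $F'$ is determined by the other five variables. Gathering the $q$-powers and rewriting the set-size factors in square-root form then produces the announced factor $q^{dn^2-(d-1)n/2-1/2}$ multiplying $\sqrt{|\mathcal{E}||\mathcal{F}|\prod_i|\mathcal{A}_i||\mathcal{B}_i|}$.

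The main obstacle will be the careful exponent bookkeeping in this final step: a purely pointwise argument on all $d$ factors only produces the looser bound $q^{dn^2-dn/2}$, of a different shape than the announced $q^{dn^2-(d-1)n/2-1/2}$. Recovering the sharp exponent requires keeping one distinguished factor $S_1$ inside the second Cauchy--Schwarz so that the cancellations between $\hat{\mathcal{E}}$, $\hat{\mathcal{F}}$ and $S_1$ — which only materialize through the second-moment representation count above, rather than through any pointwise estimate on $S_1$ — can be exploited, while the remaining $d-1$ factors are absorbed by the rank-deficient uniform bound from the previous step.
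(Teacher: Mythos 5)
Your Fourier-analytic setup is sound and genuinely different from the paper's route (the paper builds a sum--product digraph on $M_n(\mathbb{F}_q)^{d+1}$ and invokes Vu's expander mixing lemma after bounding the second eigenvalue of $A_GA_G^t$ by a rank-stratified count of common neighbours); your orthogonality decomposition of $N-M$ and the uniform bound $|S_i(T)|\le\sqrt{|\mathcal{A}_i||\mathcal{B}_i|}\,q^{n^2-n/2}$ for $T\neq 0$ are both correct. The problem is the final assembly. Tracking your own exponents: the prefactor is $q^{-n^2}\cdot q^{(d-1)(n^2-n/2)}$, the Parseval factor is $q^{n^2/2}\sqrt{|\mathcal{E}|}$, and the second-moment factor is $q^{n^2/2}\sqrt{|\mathcal{F}|}\,|\mathcal{A}_1||\mathcal{B}_1|$, so what you actually obtain is
\[
|N-M|\;\le\;q^{(d-1)n^2-(d-1)n/2}\,\sqrt{|\mathcal{A}_1||\mathcal{B}_1|}\;\sqrt{|\mathcal{E}||\mathcal{F}|\prod_{i=1}^{d}|\mathcal{A}_i||\mathcal{B}_i|}.
\]
This coincides with the announced bound only when $|\mathcal{A}_1||\mathcal{B}_1|\le q^{2n^2-1}$; when $\mathcal{A}_1,\mathcal{B}_1$ are close to all of $M_n(\mathbb{F}_q)$ you are short by a factor $q^{1/2}$, so the step ``gathering the $q$-powers \dots produces the announced factor'' does not go through as written.

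The irony is that your closing assessment is backwards, and the route you dismiss is the one that works. Bounding \emph{all} $d$ factors pointwise and then applying Cauchy--Schwarz with Parseval to $\sum_{T\neq 0}|\hat{\mathcal{E}}(T)||\hat{\mathcal{F}}(T)|\le q^{n^2}\sqrt{|\mathcal{E}||\mathcal{F}|}$ yields
\[
|N-M|\;\le\;q^{dn^2-dn/2}\,\sqrt{|\mathcal{E}||\mathcal{F}|\prod_{i=1}^{d}|\mathcal{A}_i||\mathcal{B}_i|},
\]
and since $dn/2-\bigl((d-1)n/2+1/2\bigr)=(n-1)/2\ge 0$, the exponent $dn^2-dn/2$ is \emph{smaller} than the stated $dn^2-(d-1)n/2-1/2$ for every $n\ge 2$ (they agree only at $n=1$). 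So the purely pointwise version is not looser but strictly stronger, and it already proves the theorem; the asymmetric Cauchy--Schwarz with a distinguished $S_1$ is both unnecessary and, as executed, insufficient. If you rewrite the last step this way, your argument is complete and in fact improves the paper's error term.
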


Theorem \ref{theo.main} will be proved via spectral graph theory, in particular, the expander mixing lemma of regular directed graph given by Vu \cite{Vu}. Firstly, we recall some definitions from graph theory as the following subsection, which is extracted from Section 4 in \cite{The}.

\section{Tools from spectral graph theory}
Let $G$ be a directed graph (digraph) on $n$ vertices where the in-degree and out-degree of each vertex are both $d.$\\[12pt]
Let $A_G$ be the adjacency matrix of $G$, i.e., $a_{ij} = 1$ if there is a directed edge from $i$ to $j$ and zero otherwise. Suppose that $\lambda_1 = d, \lambda_2,...,\lambda_n$ are the eigenvalues of $A_G.$ These eigenvalues can be complex, so we cannot order them, but it is known that $|\lambda_i| \le d$ for all $1 \le i \le n.$ Define $\lambda(G):= \max_{|\lambda_i| \neq d } |\lambda_i|.$ This value is called the second largest eigenvalue of $A_G.$ We say that the $n \times n$ matrix $A$ is normal if $A^tA=AA^t$ where $A^t$ is the transpose of $A.$ The graph $G$ is normal if $A_G$ is normal. There is a simple way to check whenever $G$ is normal or not. Indeed, for any two vertices $x$ and $y,$ let $N^+(x,y)$ be the set of vertices $z$ such that $\overrightarrow{xz},\overrightarrow{yz}$ are edges, and $N^-(x,y)$ be the set of vertices $z$ such that $\overrightarrow{zx},\overrightarrow{zy}$ are edges. By a direct computation, we have $A_G$ is normal if and only if $|N^+(x,y)| = |N^-(x,y)|$ for any two vertices $x$ and $y.$ 

\medskip\noindent A digraph $G$ is called an $(n,d,\lambda)-digraph$ if $G$ has $n$ vertices, the in-degree and out-degree of each vertex are both $d,$ and $\lambda(G) \le \lambda.$ Let $G$ be an $(n,d,\lambda)-digraph.$ We have the following expander mixing lemma given by Vu \cite{Vu}.

\begin{lemma}[Expander Mixing Lemma, \cite{Vu}]\label{Expander.Mixing.Lemma}
Let $G = (V,E)$ be an $(n,d,\lambda)-digraph.$ For any two sets $B,C \subset V,$ we have
\[ \left| e(B,C) - \frac{d}{n}|B||C|\right| \le \lambda\sqrt{|B||C|}\]
where $e(B,C)$ be the number of ordered pairs $(u,w)$ such that $u\in B, w \in C,$ and $\overrightarrow{uw} \in e(G).$
\end{lemma}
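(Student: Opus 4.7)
The plan is to adapt the classical spectral proof of the (undirected) Expander Mixing Lemma, with normality of $A_G$ (introduced explicitly just before the statement, and silently needed for $\lambda(G)$ to control the operator norm) supplying a unitary diagonalization over $\mathbb{C}$ in place of the usual orthogonal diagonalization of a symmetric matrix. Set $A := A_G$. The condition $AA^t = A^tA$ together with the spectral theorem for normal matrices yields an orthonormal eigenbasis $v_1,\dots,v_n \in \mathbb{C}^n$ with $Av_i = \lambda_i v_i$, where $\lambda_1 = d$ and $|\lambda_i| \le \lambda$ for $i \ge 2$. Since in-degree and out-degree both equal $d$, the all-ones vector satisfies $A\mathbf{1} = d\mathbf{1}$ and $A^t\mathbf{1} = d\mathbf{1}$, so I take $v_1 = \mathbf{1}/\sqrt{n}$; the remaining $v_i$ then span $\mathbf{1}^{\perp}$.

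Next I would write $e(B,C) = \mathbf{1}_B^t A\, \mathbf{1}_C$, where $\mathbf{1}_B,\mathbf{1}_C$ are real characteristic vectors. Decomposing $\mathbf{1}_B = \frac{|B|}{n}\mathbf{1} + f_B$ and $\mathbf{1}_C = \frac{|C|}{n}\mathbf{1} + f_C$ with $f_B,f_C \perp \mathbf{1}$, and expanding the bilinear form, the Perron--Perron contribution equals exactly $\frac{d|B||C|}{n}$ (using $\mathbf{1}^t A\mathbf{1} = dn$); the two cross contributions vanish because $\mathbf{1}^t A f_C = d\,\mathbf{1}^t f_C = 0$ and $f_B^t A\mathbf{1} = d\,f_B^t \mathbf{1} = 0$. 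Thus the entire deviation from the main term reduces to the single bilinear expression $f_B^t A f_C$, and the task becomes bounding $|f_B^t A f_C| \le \lambda\sqrt{|B||C|}$.

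For this, I would apply Cauchy--Schwarz (in the standard Hermitian inner product, which agrees with the Euclidean one on real vectors) to get $|f_B^t A f_C| \le \|f_B\|\cdot\|Af_C\|$, and then use the orthonormal eigenbasis to estimate $\|Af_C\|$. Writing $f_C = \sum_{i \ge 2} c_i v_i$ (note $c_1 = 0$ since $f_C \perp v_1$), one has $Af_C = \sum_{i \ge 2} c_i \lambda_i v_i$ and therefore $\|Af_C\|^2 = \sum_{i \ge 2}|\lambda_i|^2|c_i|^2 \le \lambda^2 \|f_C\|^2$. Combining with the direct computations $\|f_B\|^2 = |B|(1 - |B|/n) \le |B|$ and $\|f_C\|^2 \le |C|$ yields the claimed bound. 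The main obstacle is precisely the inequality $\|Af_C\| \le \lambda\|f_C\|$: for a non-normal matrix the operator norm on $\mathbf{1}^{\perp}$ can strictly exceed $\max_{i \ge 2}|\lambda_i|$, so normality is essential — it is what permits the orthonormal eigenexpansion above, and without it one would be forced into a singular-value argument that in general gives a worse bound than $\lambda$.
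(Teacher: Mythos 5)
The paper does not actually prove this lemma---it is quoted from Vu \cite{Vu}---but your argument is the standard spectral proof of that result and is correct: writing $e(B,C)=\mathbf{1}_B^tA\mathbf{1}_C$, splitting the indicator vectors along $\mathbf{1}$ and its orthogonal complement, killing the cross terms via $A\mathbf{1}=A^t\mathbf{1}=d\mathbf{1}$, and bounding $\|Af_C\|\le\lambda\|f_C\|$ on $\mathbf{1}^{\perp}$ through the unitary eigenbasis supplied by normality all go through (modulo the usual tacit assumption that $d$ is a simple eigenvalue, i.e.\ the digraph is connected). You are also right to flag that normality is genuinely needed even though it is not an explicit hypothesis in the statement---for a non-normal regular digraph the operator norm on $\mathbf{1}^{\perp}$ can exceed $\max_{i\ge 2}|\lambda_i|$ and the bound can fail---and this reading is consistent with the paper, which introduces normality immediately before the lemma and spends most of the proof of Proposition~\ref{digraph.theorem} verifying it for the sum-product digraph.
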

As $d|B||C|/n$ is the expected number of edges from $B$ to $C$, the above lemma gives us a bound for the gap between this number and the number of edges between $B$ and $C$.

\section{Proof of Theorem \ref{theo.main}} 

To prove Theorem \ref{theo.main}, we define the sum-product digraph $G = (V,E)$ with the vertex set 
$$ V = M_n(\mathbb{F}_q)^{d+1} = M_n(\mathbb{F}_q) \times M_n(\mathbb{F}_q) \times \dots \times M_n(\mathbb{F}_q)$$.
There exists a directed edge from $(A_1,\dots,A_d,E)$ to $(B_1,\dots,B_d,F)$ if 
\[ A_1B_1 + A_2B_2 + \dots + A_dB_d = E + F.\]
Theorem \ref{theo.main} will be directly followed from Lemma \ref{Expander.Mixing.Lemma} and the following proposition. 

\begin{proposition}\label{digraph.theorem}
The sum-product digraph $G = (V,E)$ is an $\left(q^{(d+1)n^2}, q^{dn^2},Cq^{dn^2 - (d-1)n/2 - 1/2}\right)-digraph$ for some positive constant $C$.
\end{proposition}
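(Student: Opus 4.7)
The plan is to verify the regularity of $G$ and then bound $\lambda(G)$ by block-diagonalizing the adjacency matrix $M = A_G$ via Fourier analysis on the additive group $V = M_n(\mathbb{F}_q)^{d+1}$. Regularity is immediate: for any source $(A_1, \dots, A_d, E)$, the out-neighbors are exactly $(B_1, \dots, B_d, \sum_i A_i B_i - E)$ as $(B_i) \in M_n(\mathbb{F}_q)^d$ varies, so the out-degree is $q^{dn^2}$, and by a symmetric parametrization so is the in-degree.

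For the spectral step, I would fix a nontrivial additive character $\chi$ of $\mathbb{F}_q$ and take a partial Fourier transform in the last coordinate, $\hat f(A, Y) := \sum_E f(A, E) \chi(\mathrm{tr}(YE))$. Substituting the explicit parametrization of out-neighbors and swapping sums produces
\[
(\widehat{Mf})(A, Y) = \sum_B \chi\!\left(\mathrm{tr}\!\left(Y \sum_i A_i B_i\right)\right) \hat f(B, -Y),
\]
so $\hat M$ is block-diagonal across the pairings $\{Y, -Y\}$ of $M_n(\mathbb{F}_q)$: the $Y = 0$ block contributes sole nonzero singular value $q^{dn^2}$ (accounting for the trivial eigenvalue of $G$), while each $Y \ne 0$ yields a $2q^{dn^2} \times 2 q^{dn^2}$ off-diagonal block built from the operators $(L_Y g)(A) = \sum_B \chi(\mathrm{tr}(Y \sum_i A_i B_i)) g(B)$.

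The crucial computation uses character orthogonality $\sum_{B_i \in M_n(\mathbb{F}_q)} \chi(\mathrm{tr}(Z B_i)) = q^{n^2} \mathbf{1}[Z = 0]$ on each of the $d$ factors to obtain
\[
(L_Y L_Y^*)_{A, A'} = q^{dn^2} \cdot \mathbf{1}\!\left[\, Y(A_i - A'_i) = 0 \text{ for all } i \,\right].
\]
For $r = \mathrm{rank}(Y)$, the matrix $L_Y L_Y^*$ is nonnegative symmetric with constant row sum $q^{dn(2n - r)}$, since each $A_i - A'_i$ must have all $n$ columns in the $(n-r)$-dimensional subspace $\ker Y$. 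Hence $\|L_Y\|_{\mathrm{op}} \le q^{dn^2 - dnr/2}$; the worst case $r = 1$ gives $\|L_Y\|_{\mathrm{op}} \le q^{dn^2 - dn/2}$, which for $n \ge 2$ is dominated by $C q^{dn^2 - (d-1)n/2 - 1/2}$ for an absolute constant $C$ (there is even some slack left to spare). Since $\sigma_1(M) = \lambda_1(M) = q^{dn^2}$ is realized by the all-ones eigenvector, Weyl's inequality $|\lambda_1 \lambda_2| \le \sigma_1 \sigma_2$ upgrades the per-block singular-value bound to $|\lambda_2(M)| \le \sigma_2(M)$, producing the claimed bound on $\lambda(G)$.

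The main technical obstacle will be setting up the Fourier block decomposition cleanly---correctly identifying the $\{Y, -Y\}$ pairing and verifying that it does not interfere with the singular-value bound of the paired off-diagonal block---together with the Weyl step that converts a per-block singular-value bound into a bound on $|\lambda_2|$. Once those structural points are in place, the row-sum estimate on $L_Y L_Y^*$ is routine, and the extraction of the worst-case rank $r = 1$ is automatic.
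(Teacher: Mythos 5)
Your proposal is correct, and it takes a genuinely different route from the paper. The paper's proof first verifies that $G$ is normal by comparing common out- and in-neighbors, then writes $A_GA_G^t$ as an explicit linear combination of the identity, the all-one matrix, and adjacency matrices of auxiliary graphs stratified by the ranks of $\left(A_1-A_1'\ \dots\ A_d-A_d'\right)$ and $E-E'$; each auxiliary degree is bounded by the matrix-counting Lemmas \ref{rank.k.lemma} and \ref{lemma.system MZ = C}, and the eigenvalue bound follows by subadditivity of spectral radii over the summands. Your Fourier decomposition in the last coordinate replaces all of this: the identity $\sum_{B_i}\chi(\mathrm{tr}(Z B_i))=q^{n^2}\mathbf{1}[Z=0]$ collapses $L_YL_Y^*$ to $q^{dn^2}$ times the indicator of a coset condition, the row-sum count $q^{dn(2n-r)}$ is exactly the kernel count that the paper's Lemma \ref{lemma.system MZ = C} approximates from above, and the Weyl majorization $|\lambda_1\lambda_2|\le\sigma_1\sigma_2$ together with $|\lambda_1|=\sigma_1=q^{dn^2}$ lets you bypass the normality verification entirely. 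Two things your approach buys: it is structurally cleaner (no case analysis on $\mathrm{rank}(M)$ versus $\mathrm{rank}(Y)$ and no Kronecker--Capelli discussion), and it is quantitatively sharper --- you obtain $\lambda(G)\le q^{dn^2-dn/2}$, which beats the stated $Cq^{dn^2-(d-1)n/2-1/2}$ by a factor of $q^{(n-1)/2}$, and in the case $d=1$, $n=2$ your bound $q^3$ is in fact exact. One caveat worth recording if you write this up: Vu's mixing lemma is proved for \emph{normal} digraphs, and your argument does not establish normality; however, this is harmless, because your uniform singular-value bound on the orthogonal complement of the all-ones vector yields the mixing inequality $\left|e(B,C)-\frac{d}{n}|B||C|\right|\le\sigma_2\sqrt{|B||C|}$ directly, without passing through eigenvalues at all.
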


To prove Proposition \ref{digraph.theorem}, we need some basic facts from linear algebra via the following lemmas. 

\begin{lemma}\label{rank.k.lemma}
Denote $M_{n \times t}(\mathbb{F}_q)$ be the set of $n \times t$ matrices with entries restricted in $\mathbb{F}_q$ with $n < t$. Then the number of matrices of rank $k \leq n$ is less than
\[ \binom{t}{k} \left(q^{n}-1\right)\left(q^n - q\right) \dots \left(q^n - q^{k-1}\right) q^{(t-k)k} \leq C_{t,k} q^{nk+k(t-k)}\]
for some positive constant $C_{t,k}$. 
\end{lemma}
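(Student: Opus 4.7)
The plan is a direct counting argument that over\-counts rather than counts exactly, which is enough for the stated upper bound.

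First I would build every rank-$k$ matrix by the following procedure. Pick an ordered tuple $(i_1,\dots,i_k)$ of $k$ column positions (out of $t$) that will contain a basis of the column space; since for any rank-$k$ matrix there is at least one such choice of positions (any $k$ columns spanning the column space), the procedure hits every rank-$k$ matrix at least once, so it yields an upper bound. Choosing the positions unordered costs $\binom{t}{k}$.

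Next, on the $k$ chosen positions I would place an ordered $k$-tuple of linearly independent vectors in $\mathbb{F}_q^n$. The standard count for ordered bases of a $k$-dimensional subspace (picking one nonzero vector, then a vector outside the previous span, and so on) gives $\prod_{i=0}^{k-1}(q^n-q^i)$. Finally, the remaining $t-k$ columns must lie in the $k$-dimensional subspace spanned by the chosen $k$ columns, so each has exactly $q^k$ possibilities, contributing $q^{k(t-k)}$. Multiplying yields
\[
\binom{t}{k}\prod_{i=0}^{k-1}(q^n-q^i)\,q^{k(t-k)},
\]
which is the first asserted bound.

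For the second bound, I would use the trivial estimate $q^n - q^i \le q^n$ for each $0 \le i \le k-1$, giving $\prod_{i=0}^{k-1}(q^n - q^i) \le q^{nk}$. Setting $C_{t,k} := \binom{t}{k}$ then produces $C_{t,k}\, q^{nk + k(t-k)}$, as claimed. There is no real obstacle here: the only subtle point is that the procedure over\-counts a rank-$k$ matrix whenever several $k$-subsets of its columns span the column space, but this over\-counting is harmless for the upper bound. The hypothesis $n<t$ plays no role in the argument itself; it merely ensures that the regime $k\le n<t$ considered in the paper's application is non\-trivial.
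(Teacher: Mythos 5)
Your proposal is correct and follows essentially the same argument as the paper: choose the $k$ column positions ($\binom{t}{k}$ ways), place an ordered tuple of $k$ independent vectors there ($\prod_{i=0}^{k-1}(q^n-q^i)$ ways), fill the remaining $t-k$ columns from the span ($q^{k(t-k)}$ ways), and note that the overcounting only helps the upper bound. The final estimate via $q^n-q^i\le q^n$ is likewise identical.
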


\begin{proof}
It is observed that the number of $k$ independent column vectors $\mathbf{c}_1,\dots,\mathbf{c}_k$ in $\mathbb{F}_q^n$ is 
\[ \left(q^n-1\right)\left(q^n - q\right) \dots \left(q^n-q^{k-1}\right).\]
For each matrix $A \in M_{n \times t}(\mathbb{F}_q)$ of rank $k$, there exist $k$ independent column vectors $\mathbf{c}_1,\dots,\mathbf{c}_k$ in $A$. Furthermore, the $t-k$ other columns can be written as a linear combination of $\left\{\mathbf{c}_{i}\right\}_{i=1}^k,$ each has $q^k$ possibilities. We can choose $k$ of $t$ column vectors, which are linearly independent. Since two different ways can be just one matrix, the number of matrices of rank $k \leq n$ is bounded by
\begin{align*}
\binom{t}{k} \left(q^n-1\right)\left(q^n-q^2\right)\dots\left(q^n-q^{k-1}\right)q^{k(t-k)} \leq C q^{nk+k(t-k)}
\end{align*}
for some positive constant $C$.
\end{proof}

\begin{lemma}\label{lemma.system MZ = C}
For $n \geq m \geq k \geq 0$, let $\mathcal{T}_{m,k}$ be the number of pair $(M,C)$ with $M \in M_{n \times t}\left(\mathbb{F}_q\right), C \in M_n(\mathbb{F}_q)$ satisfying $\mathrm{rank}(M) = m, \mathrm{rank}(C)=k$, and the system $MZ = C$ has solution. We have
\[ 
\mathcal{T}_{m,k} \ll  q^{nm+m(t-m)+mk+k(n-k)}
\] 
\end{lemma}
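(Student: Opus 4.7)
The plan is to estimate $\mathcal{T}_{m,k}$ by a two-step enumeration: first choose $M$ of rank $m$, and then, with $M$ fixed, count the admissible $C$ of rank $k$. The outer count is immediate from Lemma \ref{rank.k.lemma}, which gives at most $\ll q^{nm + m(t-m)}$ choices for $M \in M_{n \times t}(\mathbb{F}_q)$ of rank $m$.

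For the inner count, the key observation is the standard linear-algebra characterization: viewing $Z \in M_{t \times n}(\mathbb{F}_q)$ column by column, the system $MZ = C$ admits a solution if and only if every column of $C$ lies in the column space of $M$, i.e., $\mathrm{col}(C) \subseteq \mathrm{col}(M)$. Fix $M$ of rank $m$ and set $V = \mathrm{col}(M) \subset \mathbb{F}_q^n$, an $m$-dimensional subspace. Choosing a basis of $V$ identifies $V \cong \mathbb{F}_q^m$, so any $n \times n$ matrix whose columns all lie in $V$ corresponds bijectively, via this coordinate change, to an $m \times n$ matrix $\widetilde C$ over $\mathbb{F}_q$, and the identification preserves rank. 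Counting admissible $C$ of rank $k$ therefore reduces to counting $m \times n$ matrices of rank $k$; since $k \leq m \leq n$, the very same argument used to prove Lemma \ref{rank.k.lemma} (applied with rows and columns swapped) gives at most $\ll q^{mk + k(n-k)}$ such matrices.

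Multiplying the two bounds yields the desired estimate $\mathcal{T}_{m,k} \ll q^{nm + m(t-m) + mk + k(n-k)}$. I do not foresee any serious obstacle in executing this plan. The one substantive step is the translation ``$MZ = C$ is solvable $\iff \mathrm{col}(C) \subseteq \mathrm{col}(M)$,'' which is standard; after that the estimate is just Lemma \ref{rank.k.lemma} applied twice, once to $M$ in its native $n \times t$ shape and once to the reduced $m \times n$ representative $\widetilde C$ of $C$.
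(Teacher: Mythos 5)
Your proposal is correct and follows essentially the same route as the paper: fix $M$ of rank $m$ (counted by Lemma \ref{rank.k.lemma}), then reduce the count of admissible rank-$k$ matrices $C$ to counting $m \times n$ matrices of rank $k$. The only cosmetic difference is that you phrase the solvability criterion as $\mathrm{col}(C) \subseteq \mathrm{col}(M)$ and change basis on the column space, whereas the paper invokes Kronecker--Capelli and argues via row dependencies; the two are equivalent and yield the same bound $\ll q^{mk+k(n-k)}$ for the inner count.
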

\begin{proof}
For each matrix $M \in M_{n \times t}$ of rank $m$, we will bound the number of matrices $C \in M_n(\mathbb{F}_q)$ of rank $k$ that satisfy the equation $MZ = C$ has solution. It follows from Kronecker-Capelli theorem that the equation $MZ  = C$ has solution if and only if $\mathrm{rank}(M) = \mathrm{rank}(\overline{M})$ where $\overline{M} = \left( M \, C \right)$ is the matrix obtained by adding $C$ to $M$ on the right. 

Let $\mathbf{r}_{M_1},\dots, \mathbf{r}_{M_n}$ be row vectors of $M$ and $\mathbf{r}_{C_1},\dots,\mathbf{r}_{C_n}$ be row vectors of $C$, respectively. Without loss of generality, we assume that $\mathbf{r}_{M_1},\dots,\mathbf{r}_{M_k}$ are linearly independent and 
\[ \mathbf{r}_{M_i} = \alpha_{1_i} \mathbf{r}_{M_1} +\alpha_{2_i} \mathbf{r}_{M_2} + \dots +\alpha_{m_i} \mathbf{r}_{M_m}, \, i =m+1, m+2, \dots,n \]
for some $\alpha_{j_i} \in \mathbb{F}_q$ for all $j = 1,\dots,m$. Since $\mathrm{rank}(M) = \mathrm{rank}\left(\overline{M}\right)$, we have 
\[ \mathbf{r}_{C_i} = \alpha_{1_i} \mathbf{r}_{C_1} +\alpha_{2_i} \mathbf{r}_{C_2} + \dots +\alpha_{m_i} \mathbf{r}_{C_m}, \, i =m+1, m+2, \dots,n. \]
This means the $n-m$ later row vectors of $C$ are uniquely determined by first $m$ row vectors $\mathbf{r}_{C_1},\dots,\mathbf{r}_{C_m}$ for a given matrix $M$. Therefore, we only need to count number of $m$ row vectors $\mathbf{r}_{c_1},\dots,\mathbf{r}_{c_m}$ satisfying $\mathrm{rank}\left(\mathbf{r}_{c_1},\dots,\mathbf{r}_{c_m}\right) = k$, or equivalently, the number of $m \times n$ matrices of rank $k$. Hence, it follows from Lemma \ref{rank.k.lemma} that the number of matrices $C$ satisfying $MZ = C$ has solution is bounded by $dq^{mk+k(n-k)}$ for some positive constant $d$. 

Again, applying Lemma \ref{rank.k.lemma}, there are at most $dq^{nm+m(t-m)}$ matrices $M$ of rank $m$ in $M_{n \times t}$. Thus, we obtain 
\[ \mathcal{T}_{m,k} \ll q^{nm+m(t-m)+mk+k(n-k)},\]
which completes the proof of Lemma \ref{lemma.system MZ = C}.
\end{proof}

We are now ready to prove Proposition \ref{digraph.theorem}. 

\begin{proof}[\bf Proof of Proposition \ref{digraph.theorem}]
It is obvious that the order of $G$ is $q^{(d+1)n^2},$ because $\left| M_n(\mathbb{F}_q)\right|= q^{n^2}$ and so $\left|M_n(\mathbb{F}_q)
^{d+1}\right|=q^{(d+1)n^2}.$ Next, we observe that $G$ is a regular digraph of in-degree and out-degree $q^{dn^2}.$ Indeed, for any vertex $(A_1,\dots,A_d,E) \in V,$ if we choose each $d$-tuple $(B_1,\dots,B_d) \in M_n(\mathbb{F}_q)^d,$ there exists a unique $F = A_1B_1 + \dots +A_dB_d - E$ such that 
\[ A_1B_1 + \dots + A_dB_d = E + F.\]
Hence, the out-degree of any vertex in $G$ is $\left|M_n(\mathbb{F}_q)^d\right|,$ which is $q^{dn^2}.$ The same holds for the in-degree of each vertex. Therefore, to prove Proposition \ref{digraph.theorem}, we only need to bound the second largest eigenvalue of $G$. 

To this end, we first need to show that $G$ is a normal digraph. Let $A_G$ be the adjacency matrix of $G$. It is known that if $A_G$ is a normal matrix and $\beta$ is an eigenvalue of $A_G,$ then the complex conjugate $\overline{\beta}$ is an eigenvalue of $A_G^t.$ Hence, $|\beta|^2$ is an eigenvalue of $A_GA_G^t$ and $A_G^tA_G.$ In other words, in order to bound $\beta,$ it is enough to bound the second largest eigenvalue of $A_GA_G^t.$ 

Firstly, we will show that $G$ is a normal graph. Let $(A_1,A_2,\dots,A_d,E)$ and $(A'_1,A'_2,\dots,A'_d,E')$ be two different vertices, we now count the of the neighbors $(B_1,\dots,B_d,F)$ such that there are directed edges from $(A_1,\dots,A_d,E)$ and $(A'_1,\dots,A'_d,E')$ to $(B_1,\dots,B_d,F).$ This number is $N^+((A_1,\dots,A_d,E),(A'_1,\dots,A'_d,E')).$ We have 
\begin{align} \label{eq.system}
A_1B_1+ \dots + A_dB_d = E + F \text{  and  } 
A'_1B_1 + \dots + A'_dB_d = E' + F,
\end{align}
which implies 
\begin{align}\label{equation 1}
 (A_1-A'_1) B_1 + \dots + (A_d - A'_d)B_d = A - A'.
\end{align}
Note that if we fix a solution $(B_1,\dots,B_d)$ to the equation \eqref{equation 1}, then $F$ in \eqref{eq.system} is uniquely determined. Let $M = \left( A_1 - A'_1 \, \,\, A_1 - A'_1 \,\, \dots \,\,  A_d - A'_d \right)$, $X = \left( B_1 \,\,\, B_2 \,\,\dots \,\,B_d \right)$ and $Y = E - E'$, then the equation can be rewritten as the following matrix equation.
\begin{equation}\label{matrix.equa}
MX = Y
\end{equation}
with $M \in M_{n \times dn}(\mathbb{F}_q), Y \in M_n(\mathbb{F}_q)$ and $X \in M_{dn \times n}(\mathbb{F}_q)$. We now fall into the following cases. 
\begin{itemize}
\item \textbf{Case 1.} If $\mathrm{rank}(M) = n$,  then there exists unique $X$ such that $MX = Y$. Thus the system \eqref{eq.system} has only one solution in this case.

\item \textbf{Case 2.} If $\mathrm{rank}(M) = m$ and $\mathrm{rank}(Y) = k$ with $m < k \leq n$, then the equation $MX = Y$ has no solution. 

\item \textbf{Case 3.} If $\mathrm{rank}(M) = m$ and $\mathrm{rank}(Y) = k$ with $n > m \geq k$,  we need to further consider different situations as follows.
	\begin{itemize}
	\item[-] \textbf{Case 3.1.} If $\mathrm{rank}(M) = \mathrm{rank}(Y) = 0$, then $E = E', A_i = A'_i$ for all $i = 1,\dots,d$, which contracts with our assumption that $(A_1,A_2,\dots,A_d,E)$ and $(A'_1,A'_2,\dots,A'_d,E')$ are two different vertices. Thus, we can rule out this case. 
	
	\item[-] \textbf{Case 3.2.} If $\mathrm{rank}\left(\overline{M}\right) > m$ where $\overline{M} = \left( M \,\, Y\right)$, it follows from Kronecker-Capelli theorem that the equation $MX = E$ has no solution.
	
	\item[-] \textbf{Case 3.3.} Suppose that $\mathrm{rank}\left(\overline{M}\right) = m$ where $\overline{M} = \left( M \,\, Y\right)$, we have known that the equation $MX=Y$ has solution by the Kronecker Capelli theorem. Moreover, the number of its solutions is equal to the number of solutions of $MX=0.$ Now, we are ready to count the number of solutions of $MX=0$. 
	
 Put $X = [x_1 \,\, x_2 \, \dots \,x_n]$ for some column vectors $x_1,x_2,\dots,x_n \in \mathbb{F}_q^{dn},$ we just need to estimate the number of solutions of $Mx_1=0$ because the number of solutions of $MX=0$ is equal to $n^{th}$ power of the number of solutions of $Mx_1=0.$ It is known that the set $L$ of all solutions of $Ax_1=0$ is a vector subspace of $\mathbb{F}_q^{dn}$ and has the dimension $\dim L = dn - \mathrm{rank}(M) = dn-m.$ Hence, we have 
\[ |L|=q^{dn-m}.\] 
Therefore, the equation $MX=0$ has $q^{n(dn-m)}$ solutions if this equation has solutions. 	
	\end{itemize}
\end{itemize}
Since the same argument works for the case of $N^-((A_1,\dots,A_d,E),(A'_1,\dots,A'_d,E')),$ we obtain the same value for $N^-((A_1,\dots,A_d,e),(A'_1,\dots,A'_d,E')).$ In short, $A_G$ is normal.

As we discussed above, in order to bound the second largest eigenvalue of $M,$ it is enough to bound the second largest value of $MM^t.$ Note that each entry of $A_GA_G^t$ can be interpreted as counting the number of common outgoing neighbors between two vertices. Based on previous calculations, we have
\begin{align*}
 A_GA_G^t = \left(q^{dn^2}-1\right) I + J  - \sum_{0 \leq m < k \leq n} F_{m,k} - \sum_{\substack{0 \leq k \leq m < n : \\ (m,k) \neq (0,0)}}H_{m,k} + \left(q^{n(n-m)}-1\right) \sum_{\substack{0 \leq k \leq m < n : \\ (m,k) \neq (0,0)}}E_{m,k} 
\end{align*}
where $I$ is the identity matrix, $J$ denotes the all-one matrix and the others defined as follows. 

$F_{m,k}$ is the adjacency matrix of the graph $\mathcal{G}_{m,k}, 0 \leq m < k \leq n $  with the vertex set $V(F_{m,k}) = M_{n}\left(\mathbb{F}_q\right)^{d+1}$ and there is an edge between $(A_1,\dots,A_d,E)$ and $(A'_1,\dots,A'_d,E')$ if 
\[ 
\mathrm{rank}\left(A_1 - A'_1 \,\,\dots \,\, A_d - A'_d\right) = m \text{  and  } \mathrm{rank}\left(E - E'\right) = k.
\]
$H_{m,k}$ is the adjacency matrix of the graph $\mathcal{G}_{m,k}, 0 \leq k \leq m < n, (m,k) \neq (0,0)$ with the vertex set $V(H_{m,k}) = M_{n}\left(\mathbb{F}_q\right)^{d+1}$ and there is an edge between $(A_1,\dots,A_d,E)$ and $(A'_1,\dots,A'_d,E')$ if $\mathrm{rank}\left(A_1 - A'_1 \,\,\dots \,\, A_d - A'_d\right) = m, \, \mathrm{rank}\left(E - E'\right) = k$ and \[\mathrm{rank}\left(A_1-A'_1 \,\, \dots \,\, A_d - A'_d \,\,\, E - E'\right) > m.
\]
$E_{m,k}$ is the adjacency matrix of the graph $\mathcal{GP}_{m,k}, 0 \leq k \leq m < n, (m,k) \neq (0,0)$ with the vertex set $V(E_{m,k}) = M_{n}\left(\mathbb{F}_q\right)^{d+1}$ and there is an edge between $(A_1,\dots,A_d,E)$ and $(A'_1,\dots,A'_d,E')$ if $\mathrm{rank}\left(A_1 - A'_1 \,\,\dots \,\, A_d - A'_d\right) = m, \, \mathrm{rank}\left(E - E'\right) = k$ and \[\mathrm{rank}\left(A_1-A'_1 \,\, \dots \,\, A_d - A'_d \,\,\, E - E'\right) = m.\]

Using the Lemma \ref{rank.k.lemma}, one can easily to check that for any $0 \le k \leq n, 0 \leq m < n$ and  $(m,k) \neq (0,0),$ the graph $\mathcal{G}_{mk}$ is $d_{mk}-\text{regular}$ for some $d_{mk}$ where 
\[ d_{mk} \ll q^{nm+m(dn-m)}q^{nk+k(n-k)} = q^{(d+1)nm + 2nk - m^2 - k^2} \leq q^{(d+1)n^2 - (d-1)n - 1}.\] 

For the graph $\mathcal{GP}_{m,k},0 \leq k \leq m < n, (m,k) \neq (0,0),$ it follows directly from Lemma \ref{lemma.system MZ = C} that $\mathcal{G}_{mk}$ is $y_{mk}-\text{regular}$ for some $y_{mk}$ where 
\[ y_{m,k} \ll  q^{nm+m(dn-m) + mk + k(n-k)}. \]

Suppose $\lambda_2$ is the second largest eigenvalue of $A_G$ and $\overrightarrow{v_2}$ is the corresponding eigenvector. Since $G$ is a regular graph, we have $J \cdot \overrightarrow{v_2}=0.$ (Indeed, since $G$ is regular, it always has $(1,1,\dots,1)$ as an eigenvector with eigenvalue being its regular-degree. Moreover, since the graph $G$ is connected, this eigenvalue has multiplicity one. Thus any other eigenvectors will be orthogonal to $(1,1,\dots,1)$ which in turns gives us $J\cdot \overrightarrow{v_2}=0).$ Since $A_GA_G^t\overrightarrow{v_2}=|\lambda_2|^2\overrightarrow{v_2},$ we get
\begin{equation}\label{lambda.equation}
|\lambda_2|^2 \overrightarrow{v_2} = \left[\left(q^{dn^2}-1\right) - \sum_{0 \leq m < k \leq n} F_{m,k} - \sum_{\substack{0 \leq k \leq m < n : \\ (m,k) \neq (0,0)}}H_{m,k} + \left(q^{n(n-m)}-1\right) \sum_{\substack{0 \leq k \leq m < n : \\ (m,k) \neq (0,0)}}E_{m,k} \right] \overrightarrow{v_2}.
\end{equation}
One easily to check that $y_{m,k} q^{n(dn-m)} \ll q^{2dn^2 - (d-1)n - 1}$ for all $0 \leq k \leq m < n, (m,k) \neq 0$. Therefore, all previous calculations and the equation \eqref{lambda.equation} give us
\[ |\lambda_2|^2 \ll q^{2dn^2 - (d-1)n - 1},\]
since eigenvalues of a sum of matrices are bounded by the sum of largest eigenvalue of the summands. We complete the proof of Proposition \ref{digraph.theorem}. 
\end{proof}

%\bibliographystyle{abbrv}
%\bibliography{bio.bib}

\begin{thebibliography}{10}
\bibitem{Kara}
Y. D. Karabulut, D. Koh, T. Pham, C.-Y. Shen, and L. A. Vinh. Expanding phenomena over matrix rings. \textit{Forum Mathematicum}, 31(4):951–970, 2019

\bibitem{Moh}
A. Mohammadi, T. Pham, and Y. Wang. An energy decomposition theorem for matrices and related questions. \textit{arXiv:2106.07328}, 2021.

\bibitem{The}
N. V. The and L. A. Vinh. Expanding phenomena over higher dimensional matrix rings. \textit{Journal of Number Theory}, 216:174–191, 2020.

\bibitem{Vu}
V. Vu. Sum-product estimates via directed expanders. \textit{Mathematical Research Letters}, 15:375 –388, 2008.

\bibitem{Xie}
C. Xie and G. Ge. Some sum-product estimates in matrix rings over finite fields. \textit{Finite Fields and Their Applications}, 79:101997, 2022.

\end{thebibliography}

\end{document}